\providecommand{\U}[1]{\protect\rule{.1in}{.1in}}
\newtheorem{theorem}{Theorem}
\newtheorem{proposition}[theorem]{Proposition}
\newenvironment{proof}[1][Proof]{\noindent\textbf{#1.} }{\ \rule{0.5em}{0.5em}}
\begin{document}

\title{\textbf{ON GENERALIZED EULER SPIRALS IN }$E^{3}$}
\author{\textbf{Semra SARACOGLU}\thanks{Siirt University, Faculty of Science and Arts,
Department of Mathematics, Siirt, TURKEY}}
\maketitle

\begin{abstract}
The Cornu spirals on plane are the curves whose curvatures are linear.
Generalized planar cornu spirals and Euler spirals in $E^{3}$, the curves
whose curvatures are linear are defined in [1,5]. In this study, these curves
are presented as the ratio of two rational linear functions.

Also here, generalized Euler spirals in $E^{3}$ has been defined and given
their some various characterizations. The approach I used in this paper is
useful in understanding the role of Euler spirals in $E^{3}$ in differential geometry.

\textbf{AMS Subj. Class.:} 53A04, 53A05.

\textbf{Key words: }Curvature, Cornu spiral, Bertrand curve pair.

\end{abstract}

\section{INTRODUCTION}

Spirals are the curves that had been introduced in the 1700s. Privately, one
of the most fascinating spiral in nature and science is Euler Spiral. This
curve is defined by the main property that its curvature is equal to its arclength.

Euler Spirals were discovered independently by three researchers [5, 9]. In
1694, Bernoulli wrote the equations for the Euler spiral for the first time,
but did not draw the spirals or compute them numerically. In 1744, Euler
rediscovered the curve's equations, described their properties, and derived a
series expansion to the curve's integrals. Later, in 1781, he also computed
the spiral's end points. The curves were re-discovered in 1890 for the third
time by Talbot, who used them to design railway tracks [5].

On the other hand, the Euler spiral, defined by the linear relationship
between curvature and arclength, was first proposed as a problem of elasticity
of James Bernoulli, then solved accurately by Leonhard Euler [9]. The Euler
spiral, also well known as Clothoid or Cornu Spiral is a plane curve and
defined as the curve in which the curvature increases linearly with arclength.
Changing the constant of proportionality merely scales the entire curve
[9,11]. This curve is known as an example of such an aesthetic curve and also
we know that its curvature varies linearly with arclength [2,5,7, 9]. In [5],
their proposed curve has both its curvature and torsion change linearly with length.

In this paper, at the beginning, I give the basic concepts about the study
then I deal with Euler spirals whose curvature and torsion are linear and
generalized Euler spirals whose ratio between its curvature and torsion
changes linearly in $E^{3}.$

Here, I give the definitions of the spirals in $E^{2}$ also referred to as a
Cornu Spiral, then the spirals in $E^{3}$ with the name Euler Spirals. In
addition, by giving the definition of logarithmic spirals in $E^{3},$ we seek
that if the logarithmic spirals in $E^{3}$ are generalized Euler spirals or
not. In [5], the curvature and torsion are taken linearly; by using some
characterizations of [5] and also differently from [5], we have presented the
ratio of the curvature and torsion change linearly. And finally, I have named
these spirals as generalized euler spirals in $E^{3}$ by giving some different characterizations.

\section{PRELIMINARIES}

Now, the basic concepts have been recalled on classical differential geometry
of space curves. References [1, 5, 6, 10] contain the concepts about the
elements and properties of Euler spirals in $E^{2}$ and in $E^{3}.$ Then we
give the definition of generalized Euler Spirals in $E^{3}.$

In differential geometry of a regular curve in Euchlidean 3-space, it is
well-known that, one of the important problems is characterization of a
regular curve. The curvature $\kappa$ and the torsion $\tau$ of a regular
curve play an important role to determine the shape and size of the curve [5,10]

Let%

\begin{equation}
\alpha:I\rightarrow E^{3}%
\end{equation}%
\[
\text{\ \ \ \ \ \ \ }s\mapsto\alpha(s)
\]
be unit speed curve and $\left\{  T,N,B\right\}  $ Frenet frame of $\alpha.$
$T,N,B$ are the unit tangent, principal normal and binormal vectors
respectively. Let $\kappa$ and $\tau$ be the curvatures of the curve $\alpha.$

A spatial curve $\alpha(s)$ is determined by its curvature $\kappa(s)$ and its
torsion $\tau(s).$ Intuitively, a curve can be obtained from a straight \ line
by bending (curvature) and twisting (torsion). The Frenet Serret Equations
will be necessary in the derivation of Euler spirals in $E^{3}.$ In the
following%
\[
\overrightarrow{T}(s)=\frac{d\alpha}{ds}%
\]
is the unit tangent vector, $\overrightarrow{N}(s)$ is the unit normal vector,
and $\overrightarrow{B}(s)=\overrightarrow{T}(s)\times\overrightarrow{N}(s)$
is the binormal vector. We assume an arc-length parametrization.

Given a curvature $\kappa(s)$ $\rangle$ $0$ and a torsion $\tau(s),$ according
to the fundamental theorem of the local theory of curves [10], there exists a
unique (up to rigid motion) spatial curve, parametrized by the arc-length $s,$
defined by its Frenet-Serret equations, as follows:%
\begin{align}
\frac{d\overrightarrow{T}(s)}{ds}  &  =\kappa(s)\overrightarrow{N}(s),\\
\frac{d\overrightarrow{N}(s)}{ds}  &  =-\kappa(s)\overrightarrow{T}%
(s)+\tau(s)\overrightarrow{B}(s)\nonumber\\
\frac{d\overrightarrow{B}(s)}{ds}  &  =-\tau(s)\overrightarrow{N}(s).\nonumber
\end{align}

From [1], a unit speed curve which lies in a surface is said to be a Cornu
spiral if its curvatures are non-constant linear function of the natural
parameter $s,$ that is if $\alpha(s)=as+b,$ with $a\neq0.$ The classical Cornu
spiral in $E^{2}$ was studied by J. Bernoulli and it appears in diffraction.

Harary and Tall define the Euler spirals in $E^{3}$ the curve having both its
curvature and torsion evolve linearly along the curve $(Figure.1).$
Furthermore, they require that their curve conforms with the definition of a
$Cornu$ $spiral.$%

\[%
{\includegraphics[
natheight=4.587000in,
natwidth=12.788000in,
height=0.947in,
width=2.1534in
]%
{LSFQ3R00.wmf}%
}%
\]

\[
Figure.1\text{ Euler Spiral in }E^{3}\text{ }[5].
\]
Here, these curves whose curvatures and torsion evolve linearly are called
Euler spirals in $E^{3}$. Thus for some constants $a,b,c,d$ $\epsilon$ $%
\mathbb{R}
,$%
\begin{align}
\kappa(s)  &  =as+b\\
\tau(s)  &  =cs+d\nonumber
\end{align}

On the other hand, in [13], another property of Euler spirals is underlined as
: Euler spirals have a linear relation between the curvature and torsion, it
is a special case of Bertrand curves.

An attempt to generalize Euler spirals to $3D,$ maintaining the linearity of
the curvature, is presented in [5, 8]. A given polygon is refined, such that
the polygon satisfies both arc-length parametrization and linear distribution
of the $discrete$ $curvature$ $binormal$ $vector.$ The logarithm ignores the
torsion, despite being an important characteristic of $3D$ curves [5]. Harary
and Tal prove that their curve satisfies properties that characterize fair and
appealing curves and reduces to the $2D$ Euler spiral in the planar case.

Accordingly the Euler spirals in $E^{2}$ and in $E^{3}$ that satisfy
$(Equation.3)$ by a set of differential equations is the curve $\alpha$ for
which the following conditions hold:%
\begin{align*}
\frac{d\overrightarrow{T}(s)}{ds}  &  =\left(  \frac{1}{(as+b)}\right)
\overrightarrow{N}(s),\\
\frac{d\overrightarrow{N}(s)}{ds}  &  =-\left(  \frac{1}{(as+b)}\right)
\overrightarrow{T}(s)+\left(  \frac{1}{(cs+d)}\right)  \overrightarrow{B}(s)\\
\frac{d\overrightarrow{B}(s)}{ds}  &  =-\left(  \frac{1}{(cs+d)}\right)
\overrightarrow{N}(s).
\end{align*}
Next, we define logarithmic spiral having a linear radius of curvature and a
linear radius of torsion from [6]. They seek a spiral that has both a linear
radius of curvature and a linear radius of torsion in the arc-length
parametrization $s:$%
\begin{align*}
\kappa(s)  &  =\frac{1}{as+b}\\
\tau(s)  &  =\frac{1}{cs+d}%
\end{align*}
where $a,b,c$ and $d$ are constants.

\[%
{\includegraphics[
natheight=5.412900in,
natwidth=11.604000in,
height=1.094in,
width=2.3367in
]%
{LSBV8N00.wmf}%
}%
\]%
\[
Figure.2\text{ Logarithmic spiral in }E^{3}\text{ }[6].
\]
In addition to these, we want to give our definition that Euler spirals in
$E^{3}$ whose ratio between its curvature and torsion evolve linearly is
called generalized Euler spirals in $E^{3}.$ Thus for some constants $a,b,c,d$
$\epsilon$ $%
\mathbb{R}
,$%
\[
\dfrac{\kappa}{\tau}=\frac{as+b}{cs+d}%
\]

\section{EULER SPIRALS IN $E^{3}$}

In this section, we study some characterizations of Euler spirals in $E^{3}$
by giving some theorems by using the definitions in $section.2.$

\begin{proposition}
If the curvature $\tau$ is zero then $\kappa=as+b$ and then the curv\-e is
planar cornu spiral.

\begin{proof}
If $\tau=0$ and the curvature is linear, then the ratio%
\[
\frac{\tau}{\kappa}=0
\]

\end{proof}
\end{proposition}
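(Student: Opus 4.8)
The plan is to split the statement into two essentially independent pieces: first, that $\tau\equiv 0$ forces $\alpha$ to lie in a fixed plane; and second, that a planar unit-speed curve whose curvature is a non-constant linear function of $s$ is, by the definition borrowed from [1], exactly a planar Cornu spiral. Throughout, the ambient hypothesis is that we are already inside the class of Euler spirals in $E^{3}$, so that $\kappa(s)=as+b$ with $a\neq 0$ is given; the additional assumption is only that the torsion vanishes identically, i.e. $c=d=0$ in $\tau(s)=cs+d$.

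First I would use the third Frenet--Serret equation: since $\tau\equiv 0$, we get $\dfrac{d\overrightarrow{B}}{ds}=-\tau\,\overrightarrow{N}=\overrightarrow{0}$, so the binormal is a constant unit vector $\overrightarrow{B}_{0}$. Fixing $s_{0}\in I$ and setting $f(s)=\langle\,\alpha(s)-\alpha(s_{0}),\,\overrightarrow{B}_{0}\,\rangle$, differentiation gives $f'(s)=\langle\,\overrightarrow{T}(s),\,\overrightarrow{B}_{0}\,\rangle=0$ because $\overrightarrow{T}\perp\overrightarrow{B}$ along any Frenet curve. As $f(s_{0})=0$ too, $f\equiv 0$; this says precisely that $\alpha$ lies in the plane through $\alpha(s_{0})$ orthogonal to $\overrightarrow{B}_{0}$, so $\alpha$ is planar. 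Then I would invoke the given data $\kappa(s)=as+b$ with $a\neq 0$: this is a non-constant linear function of the natural parameter, which is verbatim the definition of a Cornu spiral recalled from [1]. Combining the two, $\alpha$ is a planar Cornu spiral, as claimed. The author's one-line remark that $\tau/\kappa\equiv 0$ is a compressed version of the same reasoning.

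I do not expect any real obstacle here; the only thing needing a little care is the quantifier. ``$\tau$ is zero'' has to mean $\tau(s)=0$ for every $s$, not just at one point --- an isolated zero of $\tau$ does not make $\overrightarrow{B}$ constant and would not force planarity. Similarly the hypothesis $a\neq 0$ should be kept in sight, since with $a=0$ the curvature is constant and $\alpha$ is a circle, which the definition of Cornu spiral explicitly excludes. One should also note that the ratio $\tau/\kappa$ is only literally meaningful where $\kappa\neq 0$, i.e. away from the single value $s=-b/a$; at that point one falls back on the linear-curvature clause of the definition directly. With these remarks in place, the two displayed steps complete the argument.
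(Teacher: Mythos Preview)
Your proof is correct and follows the same underlying logic as the paper --- vanishing torsion gives planarity, and linear curvature gives the Cornu-spiral condition by definition --- but you have actually supplied the argument, whereas the paper's proof consists only of the single observation $\tau/\kappa=0$ with no further justification. Your Frenet--Serret computation showing $\overrightarrow{B}$ is constant and the curve lies in a plane, together with your careful remarks on the quantifier ``$\tau\equiv 0$'' and the condition $a\neq 0$, fill in exactly the details the paper leaves implicit; there is no alternative route here, just a difference in rigor.
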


Therefore, we see that the curve is planar cornu spiral.

\begin{proposition}
If the curvatures are
\begin{align*}
\tau &  =as+b\\
\kappa &  =c
\end{align*}
then the euler spirals are rectifying curves.

\begin{proof}
If we take the ratio%
\[
\frac{\tau}{\kappa}=\frac{as+b}{c}%
\]
where $\lambda_{1}$ and $\lambda_{2}$ , with $\lambda_{1}\neq0$ are constants,
then
\[
\frac{\tau}{\kappa}=\lambda_{1}s+\lambda_{2}.
\]
It shows us that the Euler spirals are rectifying curves. It can be easily
seen from [3] that rectifying curves have very simple characterization in
terms of the ratio $\dfrac{\tau}{\kappa}.$
\end{proof}
\end{proposition}

\begin{proposition}
Euler spirals in $E^{3}$are Bertrand curves.

\begin{proof}
From \ the definition of Euler spiral in $E^{3}$ and the equation $\left(
3\right)  $ in $Section.2,$ the equations can be taken as%
\begin{align*}
\tau(s)  &  =c_{1}s+c_{2}\\
\kappa(s)  &  =d_{1}s+d_{2},\text{ with }c_{1}\neq0\text{ and }d_{1}\neq0.
\end{align*}
Here,%
\[
s=\frac{1}{c_{1}}(\tau-c_{2})
\]
and then,%
\begin{align*}
\kappa &  =\frac{d_{1}}{c_{1}}(\tau-c_{2})+d_{2}\\
c_{1}\kappa &  =d_{1}(\tau-c_{2})+c_{1}d_{2}\\
c_{1}\kappa+d_{1}\tau &  =c_{3}\\
\frac{c_{1}}{c_{3}}\kappa+\frac{d_{1}}{c_{3}}\tau &  =1
\end{align*}
Thus, we obtain%
\[
\lambda\kappa+\mu\tau=1
\]
It shows us that there is a Bertrand curve pair that corresponds to Euler
spirals in $E^{3}$.
\end{proof}
\end{proposition}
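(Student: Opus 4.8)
The plan is to reduce everything to the classical characterization of Bertrand curves: a unit-speed curve in $E^{3}$ with $\kappa>0$ is a Bertrand curve if and only if there exist real constants $\lambda,\mu$ with $\lambda\neq 0$ satisfying $\lambda\,\kappa(s)+\mu\,\tau(s)=1$ for all $s$. So it suffices to extract such a linear relation from the defining data $\kappa(s)=as+b$, $\tau(s)=cs+d$ of an Euler spiral in $E^{3}$, where by $(3)$ the slopes satisfy $a\neq 0$ and $c\neq 0$.

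First I would use $a\neq 0$ to eliminate the arclength: from $\kappa=as+b$ we get $s=(\kappa-b)/a$, and substituting this into $\tau=cs+d$ yields
\[
\tau=\frac{c}{a}\,(\kappa-b)+d,\qquad\text{that is}\qquad c\,\kappa-a\,\tau=cb-ad .
\]
Thus the Euler spiral hypothesis already forces a genuine affine dependence between $\kappa$ and $\tau$. Next, in the generic situation $cb-ad\neq 0$, I would divide by that constant and set $\lambda=\dfrac{c}{cb-ad}$ and $\mu=\dfrac{-a}{cb-ad}$, so that $\lambda\kappa+\mu\tau=1$ with $\lambda\neq 0$ (because $c\neq 0$). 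By the characterization above, $\alpha$ is a Bertrand curve; if one wants to be concrete, the mate can be taken as $\beta(s)=\alpha(s)+\lambda\,N(s)$, the curve at constant offset $\lambda$ along the common principal normal, and a short computation from the Frenet--Serret equations $(2)$ shows that its unit tangent $\overline{T}$ keeps a constant angle with $T$, which is exactly the geometric content of a Bertrand pair.

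The one step that is not automatic is the degenerate case $cb-ad=0$. There the relation collapses to $c\,\kappa=a\,\tau$, so $\kappa/\tau$ is a nonzero constant and no normalization to $\lambda\kappa+\mu\tau=1$ is possible: matching coefficients in $\lambda(as+b)+\mu(cs+d)=1$ would require $\lambda a+\mu c=0$ together with $\lambda b+\mu d=1$, an inconsistent system precisely when $ad=bc$. In this case $\alpha$ is a generalized helix, and since $a\neq 0$ its curvature is nonconstant, so it is not a circular helix and one checks directly that it admits no Bertrand mate. Hence the clean statement really carries the mild nondegeneracy hypothesis $ad\neq bc$ (the linear functions $\kappa$ and $\tau$ are not proportional); I would either add this assumption explicitly or flag the proportional subfamily as the sole exception. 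With that caveat, the whole proof is just the elementary elimination displayed above followed by an appeal to the Bertrand criterion.
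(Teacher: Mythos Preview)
Your argument is correct and follows essentially the same route as the paper: eliminate $s$ from the two linear expressions to obtain an affine relation $c\kappa-a\tau=\text{const}$, normalize to $\lambda\kappa+\mu\tau=1$, and invoke the classical Bertrand criterion. Your treatment is in fact more careful than the paper's, since you isolate the degenerate case $ad=bc$ (where $\kappa$ and $\tau$ are proportional and no such normalization exists), which the paper silently assumes away when it divides by $c_{3}$.
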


\begin{theorem}
Let $M$ be a surface in $E^{3}$ and $\alpha:I\rightarrow M$ be unit speed
curve but not a general helix. If the Darboux curve
\[
W(s)=\tau T+\kappa B
\]
is geodesic curve on the surface $M,$ then the curve $\alpha$ is Euler spirals
in $E^{3}.$
\end{theorem}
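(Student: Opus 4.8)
The plan is to read off, from the hypothesis that $W$ is a geodesic of $M$, two scalar identities for $\kappa$ and $\tau$ which together say exactly that both functions are affine in $s$, i.e. that $\alpha$ satisfies $(3)$.

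The first step is a short computation with the Frenet--Serret equations $(2)$. Differentiating $W=\tau T+\kappa B$ along $\alpha$, the two terms along $N$ cancel and one is left with
\[
W'=\tau'T+\tau\kappa N+\kappa'B-\kappa\tau N=\tau'T+\kappa'B,
\]
so $W'$ lies in the rectifying plane $\operatorname{span}\{T,B\}$ of $\alpha$ and $\|W'\|^{2}=\tau'^{2}+\kappa'^{2}$. Differentiating once more,
\[
W''=\tau''T+(\kappa\tau'-\kappa'\tau)N+\kappa''B.
\]
It is worth noting here why the hypothesis that $\alpha$ is not a general helix is indispensable: if $\tau/\kappa$ were a constant $k$, then $W'=\kappa'(kT+B)$ and $(kT+B)'=(k\kappa-\tau)N=0$, so $W$ would be a (possibly degenerate) straight line, which is automatically a geodesic of any surface that contains it; then the hypothesis would be empty and the conclusion false. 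Excluding general helices is precisely what makes $W$ a genuine regular space curve on which ``geodesic'' carries content.

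The second step uses the two standard properties of a geodesic of $M$. Since a geodesic is parametrised proportionally to arclength, $\|W'\|$ is constant, so
\[
\tau'^{2}+\kappa'^{2}=\mathrm{const}.
\]
Since moreover the acceleration of a geodesic is orthogonal to $M$ while the tangent $T$ of $\alpha\subset M$ is tangent to $M$, the $T$-component of $W''$ must vanish; by the formula for $W''$ this says $\tau''=0$, hence $\tau(s)=cs+d$. Feeding $\tau'=c$ into $\tau'^{2}+\kappa'^{2}=\mathrm{const}$ gives $\kappa'^{2}=\mathrm{const}$, and as $\kappa'$ is continuous on the interval $I$ it is itself constant, so $\kappa(s)=as+b$. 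Thus $\kappa$ and $\tau$ both evolve linearly, which is the definition $(3)$ of an Euler spiral in $E^{3}$.

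The point I expect to require the most care is the precise reading of ``$W$ is a geodesic curve on $M$'': one must fix the interpretation so that $W$ is a regular curve whose ambient acceleration is normal to $M$ along it --- regularity being exactly what the non-helix hypothesis provides --- after which the whole argument collapses to the two Frenet--Serret computations above together with the elementary observations that $\langle W'',T\rangle=\tau''$ and that geodesics have constant speed.
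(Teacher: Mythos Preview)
Your Frenet computations for $W'$ and $W''$ match the paper's exactly, but the way you exploit the geodesic hypothesis is different. The paper argues in one step: since $W$ is a geodesic of $M$ one has $W''(s)=\lambda(s)\,n(s)$ with $n$ the unit normal of $M$, and it then asserts $n=N$ (the principal normal of $\alpha$). Comparing with the formula $W''=\tau''T+\kappa''B+(\kappa\tau'-\tau\kappa')N$ forces both $\tau''=0$ and $\kappa''=0$ simultaneously, whence $\kappa=as+b$ and $\tau=cs+d$. You instead try to extract two separate scalar relations: the constant-speed identity $\tau'^{2}+\kappa'^{2}=\mathrm{const}$, and $\langle W'',T\rangle=\tau''=0$, feeding the latter into the former to obtain $\kappa'=\mathrm{const}$.

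There is a genuine gap in your second relation. You justify $\langle W'',T\rangle=0$ by saying that $W''$ is orthogonal to $M$ while $T$, being the tangent of $\alpha\subset M$, is tangent to $M$. But these two statements live at different points of $M$: the geodesic condition gives $W''(s)\perp T_{W(s)}M$, whereas $T(s)=\alpha'(s)$ lies in $T_{\alpha(s)}M$. Nothing in the hypotheses relates the tangent plane of $M$ at the point $W(s)$ to the Frenet frame of $\alpha$ at the point $\alpha(s)$, so you cannot conclude $\tau''=0$ this way. Your constant-speed identity alone only yields $\tau'\tau''+\kappa'\kappa''=0$, one equation short of forcing both curvatures to be affine. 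The paper sidesteps this by its (admittedly unargued) identification $n=N$, which kills the $T$- and $B$-components of $W''$ at once; to repair your argument you would need some comparable statement linking the normal of $M$ along $W$ to the frame $\{T,N,B\}$ of $\alpha$.
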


We have
\begin{align}
W(s)  &  =\tau T+\kappa B\\
W^{\prime}(s)  &  =\tau^{\prime}T+\kappa^{\prime}B\\
W^{\prime\prime}(s)  &  =\tau^{\prime\prime}T+\kappa^{\prime\prime}%
B+(\kappa\tau^{\prime}-\tau\kappa^{\prime})N\\
W^{\prime\prime}(s)  &  =\tau^{\prime\prime}T+\kappa^{\prime\prime}B+\left[
\left(  \frac{\tau}{\kappa}\right)  ^{\prime}\kappa^{2}\right]  N
\end{align}
Here, $n(s)$ is the unit normal vector field of the surface $M.$ Darboux curve
is geodesic on surface $M,$ therefore
\[
W^{\prime\prime}(s)=\lambda(s)n(s).
\]
And also $n=N,$ \ then we have
\[
W^{\prime\prime}(s)=\lambda(s)N(s).
\]
From (7), if we take
\[
\tau^{\prime\prime}=0,\text{ }\kappa^{\prime\prime}=0
\]
and also%
\begin{align*}
\kappa &  =as+b\\
\tau &  =cs+d
\end{align*}
then the curve $\alpha$ is generalized Euler spiral in $E^{3}.$

\section{GENERALIZED EULER SPIRALS IN $E^{3}$}

In this section, we investigate generalized Euler spirals in $E^{3}$ by using
the definitions in $section.2.$

\begin{theorem}
In $E^{3},$ all logarithmic spirals are generalized euler spirals.

\begin{proof}
As it is known that in all logarithmic spirals, the curvatures are linear as:%
\begin{align*}
\kappa(s)  &  =\frac{1}{as+b}\\
\tau(s)  &  =\frac{1}{cs+d}%
\end{align*}
In that case, it is clear that the ratio between the curvatures can be given
as:%
\[
\dfrac{\kappa}{\tau}=\frac{cs+d}{as+b}%
\]
Thus, it can be easily seen all logarithmic spirals are generalized euler spirals.

\begin{theorem}
Euler spirals are generalized Euler spirals in $E^{3}.$

\begin{proof}
It is clear from the property of curvature, torsion and the ratio that are
linear as:%
\begin{align*}
\kappa(s)  &  =as+b\\
\tau(s)  &  =cs+d
\end{align*}
and then%
\[
\dfrac{\kappa}{\tau}=\frac{as+b}{cs+d}%
\]
That shows us Euler spirals are generalized Euler spirals.
\end{proof}
\end{theorem}
\end{proof}
\end{theorem}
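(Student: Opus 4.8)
The plan is to derive the statement directly from the two definitions set out in Section 2, so the argument is essentially bookkeeping. First I would recall that, by the definition of an Euler spiral in $E^{3}$ together with Equation (3), there are constants $a,b,c,d\in\mathbb{R}$ (with $a\neq 0$ and $c\neq 0$, so that curvature and torsion are genuinely non-constant linear functions, in accordance with the Cornu-spiral requirement) for which $\kappa(s)=as+b$ and $\tau(s)=cs+d$ along $\alpha$. Next I would recall the definition of a generalized Euler spiral in $E^{3}$: it is a curve whose ratio $\kappa/\tau$ is expressible as the quotient $(as+b)/(cs+d)$ of two linear functions of the arclength $s$.

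The main step is then to form the quotient of the two Frenet invariants. Provided $\tau$ is not identically zero --- which holds automatically once $c\neq 0$, since $cs+d$ can vanish at most at one value of $s$ --- dividing gives
\[
\frac{\kappa(s)}{\tau(s)}=\frac{as+b}{cs+d}
\]
on the cofinite subset of $I$ on which $cs+d\neq 0$. The right-hand side is, by inspection, a ratio of two linear polynomials in $s$, and its coefficient data $(a,b)$ and $(c,d)$ are exactly those coming from the linear curvature and torsion of the Euler spiral; no new constants need to be introduced and nothing need be verified beyond this substitution. Hence $\alpha$ meets the defining condition for a generalized Euler spiral, and the inclusion is proved. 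One then observes, just as in the preceding theorem for logarithmic spirals, that the containment is proper: a generalized Euler spiral need only have $\kappa/\tau$ of rational-linear form, whereas here $\kappa$ and $\tau$ are separately linear.

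I do not anticipate a genuine obstacle; the only point to dispatch is the degenerate case $\tau\equiv 0$, in which the quotient $\kappa/\tau$ is undefined. But this situation is already covered by the earlier proposition, which identifies such a curve as a planar Cornu spiral, so it may simply be excluded by the standing convention $\tau\neq 0$ used throughout Section 3 (alternatively, one views the planar Cornu spiral as the limiting member of the family). With that caveat recorded, the theorem follows immediately from placing the two definitions side by side.
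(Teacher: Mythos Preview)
Your proposal is correct and follows essentially the same approach as the paper: recall that an Euler spiral has $\kappa(s)=as+b$ and $\tau(s)=cs+d$, form the quotient $\kappa/\tau=(as+b)/(cs+d)$, and observe that this is exactly the defining condition for a generalized Euler spiral. The paper's argument is just this one-line substitution; your version adds care about the nondegeneracy conditions $a,c\neq 0$ and the possible vanishing of $\tau$, but these refinements do not change the route.
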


\begin{proposition}
All generalized Euler spirals in $E^{3}$ that have the property
\[
\dfrac{\tau}{\kappa}=d_{1}s+d_{2}%
\]
are rectifying curves.

\begin{proof}
If the curvatures $\kappa(s)$ and $\tau(s)$ are taken as
\begin{align*}
\kappa(s)  &  =c\\
\tau(s)  &  =d_{1}s+d_{2}\text{ with }d_{1}\neq0
\end{align*}
then%
\[
\dfrac{\tau}{\kappa}=\frac{d_{1}s+d_{2}}{c}=\lambda_{1}s+\lambda_{2}\text{ }%
\]
where $\lambda_{1}$ and $\lambda_{2}$ are constants. This gives us that if the
curve $\alpha$ is generalized Euler spiral then it is also in rectifying plane.
\end{proof}
\end{proposition}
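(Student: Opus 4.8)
The plan is to reduce the statement to the known characterization of rectifying curves in terms of the ratio $\tau/\kappa$, which the paper has already invoked (citing [3]) in Proposition 2 and in Theorem 4. So the proof is essentially an unwinding of definitions plus an appeal to that external result. First I would recall that a \emph{generalized Euler spiral in $E^{3}$} is, by the definition given in Section 2, a unit speed curve for which $\kappa/\tau$ is a ratio of two linear functions of arclength, $\kappa/\tau=(as+b)/(cs+d)$. Under the extra hypothesis of the proposition, namely that $\tau/\kappa$ is itself an affine function $d_{1}s+d_{2}$, I would then note that it suffices to exhibit one representative pair $(\kappa,\tau)$ realizing this behaviour; the natural choice, following the displayed computation style of the earlier propositions, is $\kappa(s)=c$ constant and $\tau(s)=d_{1}s+d_{2}$ with $d_{1}\neq0$, since then $\tau/\kappa=(d_{1}/c)s+(d_{2}/c)$ is indeed of the required affine form $\lambda_{1}s+\lambda_{2}$ with $\lambda_{1}\neq0$.

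Next I would state the key fact from [3]: a unit speed curve in $E^{3}$ with $\kappa>0$ is a rectifying curve (its position vector always lies in the rectifying plane $\mathrm{span}\{T,B\}$) if and only if $\tau/\kappa$ is a non-constant affine function of arclength, i.e. $\tau/\kappa = \lambda_{1}s+\lambda_{2}$ with $\lambda_{1}\neq0$. Since the hypothesis of the proposition supplies exactly this condition, the conclusion follows immediately: the curve $\alpha$ is a rectifying curve, equivalently it lies in (has its position vector confined to) the rectifying plane. I would present this as a two-line deduction after the definitional setup, mirroring the terse style of Propositions 1--3.

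The only real subtlety — and the step I would flag as the main obstacle — is the requirement $\lambda_{1}\neq0$ (equivalently $d_{1}\neq0$) in the characterization of rectifying curves: if $\tau/\kappa$ were constant the curve would be a general helix, not a rectifying curve. So I would make sure the hypothesis is read as asserting $d_{1}\neq0$, and say so explicitly, so that the cited characterization from [3] applies verbatim. Beyond that, there is no computation to grind through: the Frenet--Serret apparatus from Section 2 is not even needed, since the rectifying-curve characterization already packages all the differential-geometric content. I would close by remarking, as the paper does elsewhere, that this places every such generalized Euler spiral in its rectifying plane, completing the proof.
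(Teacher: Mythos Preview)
Your proposal is correct and follows essentially the same approach as the paper: both take the representative pair $\kappa(s)=c$, $\tau(s)=d_{1}s+d_{2}$ with $d_{1}\neq0$, compute $\tau/\kappa=\lambda_{1}s+\lambda_{2}$, and then invoke the characterization of rectifying curves from [3]. Your version is somewhat more explicit about the role of the hypothesis $d_{1}\neq0$ and the logical status of the cited result, but the argument is the same.
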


\textbf{Result.1 }General helices are generalized euler spirals.

\begin{proof}
It \ can be seen from the property of curvatures that are linear and the ratio
is also constant as it is shown:%
\[
\dfrac{\tau}{\kappa}=\lambda
\]

\end{proof}

\begin{theorem}
Let
\begin{equation}
\alpha:I\rightarrow E^{3}%
\end{equation}%
\[
\text{\ \ \ \ \ \ \ }s\mapsto\alpha(s)
\]
be unit speed curve and let $\kappa$ and $\tau$ be the curvatures of the
Frenet vectors of the curve $\alpha.$ For $a,b,c,d,\lambda\in%
\mathbb{R}
,$ let take the curve $\beta$ as%
\begin{equation}
\beta(s)=\alpha(s)+(as+b)T+(cs+d)B+\lambda N.
\end{equation}
In this case, the curve $\alpha$ is generalized Euler spiral which has the
property%
\[
\frac{\kappa}{\tau}=\frac{cs+d}{as+b}%
\]
if and only if the curves $\beta$ and $(T)$ are the involute-evolute pair.
Here, the curve $(T)$ is the tangent indicatrix of $\alpha$.

\begin{proof}
The tangent of the curve $\beta$ is%
\begin{equation}
\beta^{\prime}(s)=((1-\lambda)\kappa+a)T+(c+\lambda\tau)B+(\kappa
(as+b)-\tau(cs+d))N.
\end{equation}

The tangent of the curve $(T)$ is%
\[
\frac{dT}{ds_{T}}=N.
\]
Here, $s_{T}$ is the arc parameter of the curve $(T).$%
\begin{equation}
\left\langle \beta^{\prime},N\right\rangle =\kappa(as+b)-\tau(cs+d)
\end{equation}
If the curves $\beta$ and $(T)$ are the involute-evolute pair then
\[
\left\langle \beta^{\prime},N\right\rangle =0.
\]
From (11), it can be easily obtained
\[
\frac{\kappa}{\tau}=\frac{cs+d}{as+b}%
\]
This means that the curve $\alpha$ is generalized Euler spiral.

On the other hand, if the curve $\alpha$ is generalized Euler spiral which has
the property%
\[
\frac{\kappa}{\tau}=\frac{cs+d}{as+b},
\]
then from (11)%
\[
\left\langle \beta^{\prime},N\right\rangle =0.
\]
This means that $\beta$ and $(T)$ are the involute-evolute pair.
\end{proof}
\end{theorem}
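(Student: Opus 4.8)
The plan is to reduce the involute--evolute condition to a single scalar equation coming from the $N$-component of $\beta'$, and then to recognise that equation as the defining relation of a generalized Euler spiral in $E^{3}$.

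First I would differentiate (9) with the help of the Frenet--Serret equations (2). Writing everything in the frame $\{T,N,B\}$ and using $\alpha'=T$, $T'=\kappa N$, $B'=-\tau N$, $N'=-\kappa T+\tau B$, the terms reorganise so that the coefficient of $N$ in $\beta'(s)$ is exactly $\kappa(as+b)-\tau(cs+d)$, in agreement with (10). This step is routine and is the only place where the specific form of $\beta$ in (9) enters.

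Next I would record the elementary fact that the tangent indicatrix $(T)$, namely the curve $s\mapsto T(s)$, satisfies $\dfrac{dT}{ds}=\kappa N$, so that after reparametrisation by its own arc length $s_{T}$ its unit tangent vector is $N$. Hence, by the characterisation of an involute--evolute pair (the tangent of each curve is orthogonal to the tangent of its partner at corresponding points, the correspondence being via the common parameter $s$), the curves $\beta$ and $(T)$ form an involute--evolute pair if and only if $\langle\beta'(s),N(s)\rangle=0$ for every $s$.

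Finally, orthonormality of $\{T,N,B\}$ kills the $T$- and $B$-contributions, so that $\langle\beta',N\rangle=\kappa(as+b)-\tau(cs+d)$, which is (11). Thus $\langle\beta',N\rangle\equiv 0$ is equivalent to $\kappa(as+b)=\tau(cs+d)$, and, under the natural nondegeneracy conditions $as+b\neq 0$ and $\tau\neq 0$, to $\dfrac{\kappa}{\tau}=\dfrac{cs+d}{as+b}$, i.e.\ to $\alpha$ being a generalized Euler spiral in $E^{3}$. Reading this chain of equivalences forwards gives the \emph{if} direction and backwards the \emph{only if} direction, so the theorem follows. I expect no real obstacle here: the only points deserving care are the frame bookkeeping in the derivative of $\beta$ and the explicit invocation (rather than silent use) of the orthogonal-tangents characterisation of involute--evolute pairs, since the ansatz (9) is precisely engineered so that it is the $N$-coefficient of $\beta'$ that must vanish.
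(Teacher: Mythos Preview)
Your proposal is correct and follows essentially the same route as the paper: compute $\beta'(s)$ via Frenet--Serret, identify the unit tangent of the tangent indicatrix $(T)$ as $N$, and reduce the involute--evolute condition to $\langle\beta',N\rangle=\kappa(as+b)-\tau(cs+d)=0$. If anything, you are slightly more careful than the paper in making the nondegeneracy hypotheses $as+b\neq0$, $\tau\neq0$ explicit and in stating the orthogonal-tangents characterisation of involute--evolute pairs rather than using it silently.
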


\textbf{Result.2 }From the hypothesis of the theorem above, the curve $\alpha$
is generalized Euler spiral which has the property%
\[
\frac{\kappa}{\tau}=\frac{cs+d}{as+b}%
\]
if and only if $\beta$ and $(B)$ are the involute-evolute pair. Here, $(B)$ is
the binormal of the curve $\alpha.$

\begin{proof}
The tangent of the curve $(B)$ is
\[
\frac{dB}{ds_{B}}=-N.
\]
The smiliar proof above in \textbf{Theorem 6 } can be given for $(B)$ again.

\begin{theorem}
Let the ruled surface $\Phi$ be%
\begin{align}
\Phi &  :I\times%
\mathbb{R}
\longrightarrow E^{3}\\
(s,v)  &  \rightarrow\Phi(s,v)=\alpha(s)+v[(as+b)T+(cs+d)B]\nonumber
\end{align}
The ruled surface $\Phi$ is developable if and only if the curve $\alpha$ is
generalized Euler spiral which has the property%
\[
\frac{\kappa}{\tau}=\frac{cs+d}{as+b}.
\]
Here, $\alpha$ is the base curve and $T,$ $B$ are the tangent and binormal of
the curve $\alpha,$ respectively.
\end{theorem}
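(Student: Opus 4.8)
The plan is to invoke the classical criterion that a ruled surface $\Phi(s,v)=\alpha(s)+v\,\delta(s)$ with ruling direction $\delta(s)=(as+b)T+(cs+d)B$ is developable if and only if the scalar triple product $\langle\alpha'(s),\delta(s),\delta'(s)\rangle$ vanishes identically (equivalently, the distribution parameter of $\Phi$ is zero everywhere). So the first step is to record $\alpha'=T$ and to differentiate the ruling $\delta$ using the Frenet--Serret equations $(2)$, namely $T'=\kappa N$ and $B'=-\tau N$, which yields
\[
\delta'(s)=aT+cB+\big[(as+b)\kappa-(cs+d)\tau\big]N .
\]

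Next I would express all three vectors in the orthonormal Frenet frame $\{T,N,B\}$, so that $\alpha'=(1,0,0)$, $\delta=(as+b,\,0,\,cs+d)$ and $\delta'=(a,\,(as+b)\kappa-(cs+d)\tau,\,c)$, and expand the resulting $3\times 3$ determinant. Expansion along the first row collapses it to a single $2\times 2$ minor, giving
\[
\langle\alpha',\delta,\delta'\rangle=-(cs+d)\big[(as+b)\kappa-(cs+d)\tau\big],
\]
so that developability of $\Phi$ is equivalent to $(cs+d)\big[(as+b)\kappa-(cs+d)\tau\big]=0$ on $I$.

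For the forward implication, since $cs+d$ is nonzero on the domain (so that both the ratio and the surface are well posed), I can cancel it to obtain $(as+b)\kappa=(cs+d)\tau$, i.e. $\kappa/\tau=(cs+d)/(as+b)$, which is exactly the generalized Euler spiral property in the statement. For the converse, if $\alpha$ already satisfies $\kappa/\tau=(cs+d)/(as+b)$ then the bracket $(as+b)\kappa-(cs+d)\tau$ is identically zero, hence so is the triple product, and $\Phi$ is developable; combining the two directions proves the equivalence.

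The computation is routine linear algebra with the Frenet frame, so there is no serious obstacle; the only point that needs a little care is keeping track of the subset of $I$ on which the linear factors $as+b$ and $cs+d$ do not vanish, so that the ruling $\delta$ is not parallel to $T$ (hence $\Phi$ is a genuine, non-degenerate ruled surface) and the ratio $\kappa/\tau$ is defined there. It is also worth noting that the key quantity $(as+b)\kappa-(cs+d)\tau$ is precisely the expression that appeared as $\langle\beta',N\rangle$ in Theorem 6, which explains why the generalized Euler spiral condition resurfaces in this setting.
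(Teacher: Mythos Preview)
Your proof is correct and follows essentially the same route as the paper: both set $X(s)=\delta(s)=(as+b)T+(cs+d)B$, differentiate via Frenet--Serret to obtain $X'(s)=aT+[(as+b)\kappa-(cs+d)\tau]N+cB$, compute the determinant $\det(T,X,X')$ in the Frenet frame, and cancel the factor $cs+d$ to arrive at $(as+b)\kappa=(cs+d)\tau$. Your write-up is in fact a bit cleaner---you keep the sign and bracketing of the determinant straight and you make the nondegeneracy hypothesis $cs+d\neq 0$ explicit---but the argument is the same.
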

\end{proof}

\begin{proof}
For the directrix of the surface
\[
X(s)=(as+b)T+(cs+d)B,
\]
and also for
\[
X^{\prime}(s)=aT+[(as+b)\kappa-(cs+d)\tau]N+cB,
\]
we can easily give that%
\[
\det(T,X,X^{\prime})=\left\vert
\begin{array}
[c]{ccc}%
1 & 0 & 0\\
as+b & 0 & cs+d\\
a & (as+b)\kappa-\tau(cs+d) & c
\end{array}
\right\vert
\]
In this case, the ruled surface is developable if and only if $\det
(T,X,X^{\prime})=0$ then%
\[
(cs+d)(as+b)\kappa-(cs+d)\tau=0
\]
then for $cs+d\neq0$%
\[
(as+b)\kappa-(cs+d)\tau=0.
\]
Thus, the curve $\alpha$ is generalized Euler spiral which has \ the property%
\[
\frac{\tau}{\kappa}=\frac{as+b}{cs+d}.
\]

\end{proof}

\begin{theorem}
Let $\alpha:I\rightarrow M$ be unit speed curve but not a general helix. If
the curve
\[
U(s)=\frac{1}{\kappa}T+\frac{1}{\tau}B
\]
is ta geodesic curve then the curve $\alpha$ is a logarithmic spiral in
$E^{3}.$
\end{theorem}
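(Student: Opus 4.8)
The plan is to follow the pattern of the proof of Theorem 4: differentiate $U$ twice with the help of the Frenet--Serret equations, impose the geodesic condition, and read off the shape of $\kappa$ and $\tau$.

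First I would compute $U'(s)$. From $U(s)=\dfrac{1}{\kappa}T+\dfrac{1}{\tau}B$ together with $T'=\kappa N$ and $B'=-\tau N$, the two principal--normal contributions cancel and one is left with
\[
U'(s)=\left(\frac{1}{\kappa}\right)^{\prime}T+\left(\frac{1}{\tau}\right)^{\prime}B ,
\]
so $U'$ lies in the plane spanned by $T$ and $B$. Differentiating once more and substituting the Frenet--Serret formulas again gives
\[
U''(s)=\left(\frac{1}{\kappa}\right)^{\prime\prime}T+\left(\frac{1}{\tau}\right)^{\prime\prime}B+\left(\frac{\tau'}{\tau}-\frac{\kappa'}{\kappa}\right)N .
\]

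Next I would bring in the hypothesis. Since $U$ is a geodesic lying on $M$, its acceleration is normal to the surface, i.e. $U''(s)=\lambda(s)\,n(s)$ with $n$ the unit normal field of $M$; exactly as in Theorem 4, along the curve one has $n=N$, so $U''(s)=\lambda(s)N(s)$. Matching this with the expression above forces the $T$-- and $B$--coefficients to vanish, that is $\left(\dfrac{1}{\kappa}\right)^{\prime\prime}=0$ and $\left(\dfrac{1}{\tau}\right)^{\prime\prime}=0$. Integrating twice, there are constants $a,b,c,d$ with
\[
\frac{1}{\kappa(s)}=as+b,\qquad\frac{1}{\tau(s)}=cs+d ,
\]
that is $\kappa(s)=\dfrac{1}{as+b}$ and $\tau(s)=\dfrac{1}{cs+d}$, which is precisely the defining relation of a logarithmic spiral in $E^{3}$ recalled in Section 2; hence $\alpha$ is a logarithmic spiral in $E^{3}$. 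The hypothesis that $\alpha$ is not a general helix is what keeps the conclusion non-degenerate: it guarantees $\dfrac{\kappa}{\tau}=\dfrac{cs+d}{as+b}$ is genuinely non-constant (equivalently $ad\neq bc$) and that $U$ stays a regular curve to which the geodesic notion applies.

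The step I expect to be the main obstacle is the implication ``$U$ geodesic $\Rightarrow U''=\lambda N$''. Two points need care. First, the identification $n=N$ of the surface normal along $U$ with the principal normal of $\alpha$ is used here in the same spirit as in Theorem 4, but it is an extra assumption rather than a consequence and should be recorded as such. Second, $U$ is in general not unit speed, so the clean statement ``acceleration $\parallel$ normal'' is really the constant--speed (genuine, not merely pre-) geodesic condition; granting it, the tangential part of $U''$ — which by the computation is exactly its $T,B$--component — must vanish, and one simultaneously uses $\langle U',U''\rangle=0$ together with $U'$ lying in the $T,B$--plane to conclude that both second derivatives, not just a combination of them, are zero. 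Once these normalizations are in place, the remainder is the routine double integration carried out above.
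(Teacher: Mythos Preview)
Your proof is correct and follows essentially the same route as the paper: compute $U'$ and $U''$ via Frenet--Serret, impose $U''=\lambda N$, read off $(1/\kappa)''=0$ and $(1/\tau)''=0$, and integrate. Your discussion of the identification $n=N$ and of the parametrization issue is in fact more careful than the paper, which simply asserts the analogue of Theorem~4 without comment; the only superfluous remark is the appeal to $\langle U',U''\rangle=0$ at the end, since once $U''=\lambda N$ the vanishing of both $(1/\kappa)''$ and $(1/\tau)''$ follows immediately from the linear independence of $T$ and $B$.
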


\begin{proof}
We have
\begin{align*}
U^{\prime}  &  =\left(  \frac{1}{\kappa}\right)  ^{\prime}T+\left(  \frac
{1}{\tau}\right)  ^{\prime}B\\
U^{\prime\prime}  &  =\left(  \frac{1}{\kappa}\right)  ^{\prime\prime
}T+\left(  \frac{1}{\tau}\right)  ^{\prime\prime}B+\left[  \left(  \frac
{1}{\kappa}\right)  ^{\prime}\kappa-\left(  \frac{1}{\tau}\right)  ^{\prime
}\tau\right]  N.
\end{align*}
Here,
\[
\left[  \left(  \frac{1}{\kappa}\right)  ^{\prime}\kappa-\left(  \frac{1}%
{\tau}\right)  ^{\prime}\tau\right]  \neq0
\]
and then
\begin{align*}
\left(  \frac{1}{\kappa}\right)  ^{\prime}\kappa-\left(  \frac{1}{\tau
}\right)  ^{\prime}\tau &  =-\frac{\kappa^{\prime}}{\kappa}+\frac{\tau
^{\prime}}{\tau}\\
&  =\frac{-\tau\kappa^{\prime}+\kappa\tau^{\prime}}{\kappa\tau}\\
&  =\left(  \frac{\kappa}{\tau}\right)  ^{\prime}\left(  \frac{\tau}{\kappa
}\right)  .
\end{align*}
If we take
\[
\left(  \frac{1}{\kappa}\right)  ^{\prime\prime}=0\text{ and }\left(  \frac
{1}{\tau}\right)  ^{\prime\prime}=0
\]
then the curve $\alpha$ is a logarithmic spiral. Here, it can be easily seen
that
\begin{align*}
\frac{1}{\kappa}  &  =as+b\\
\frac{1}{\tau}  &  =cs+d
\end{align*}
Thus, it is clear that the curve $\alpha$ is a logarithmic spiral.
\end{proof}

\section{CONCLUSIONS}

The starting point of this study is to extend the notion of Euler spirals to
$E^{3}$ by using some important properties. First, I introduce Euler spirals
in $E^{2}$ and in $E^{3}$ then define the generalized Euler spirals in $E^{3}%
$. Using these concepts, some necessary conditions for these curves to be
Euler spirals and generalized Euler spirals in $E^{3}$ are presented. Also,
some different characterizations of these curves are expressed by giving
theorems with their results. At this time, it is obtained that Euler spirals
in $E^{3}$ are Bertrand curves. Additionally, I show that all Euler spirals
are generalized Euler spirals in $E^{3}$ and also general helices are
generalized Euler spirals in $E^{3}.$ Moreover many different approaches about
generalized Euler spirals in Euchlidean 3-space are presented in this paper.

I hope that this study will gain different interpretation to the other studies
in this field.


\begin{thebibliography}{99}                                                                                               %


\bibitem {1}Arroyo,J., Barros, M.,Garay, O.J., A Characterization of Helices
and Cornu Spirals in Real Space Forms, Bull. Austral. Math. Soc., Vol. 56
(1997) [37-49].

\bibitem {2}B. Kimia, I. Frankel, and A. Popescu. Euler spiral for shape
completion. Int. J. Comp. Vision, 54(1):159\{182, 2003.

\bibitem {3}Chen B., Y., \textit{When Does the Position Vector of a Space
Curve Always Lie in Its Rectifying Plane?, }American Mathematical Monthly,
volume 110, number 2, February 2003.

\bibitem {4}Hac\i salihoglu, H. H., Hareket Geometrisi ve Kuaterniyonlar
Teorisi, Gazi \"{U}niversitesi Yay\i n No:30.

\bibitem {5}Harary, G., Tal, A., \textit{3D Euler Spirals for 3D Curve
Completion, }Symposium on Computational Geometry 2010: 107-108.

\bibitem {6}Harary, G., Tal, A., \textit{The Natural 3D Spiral, Computer
Graphics Forum, }Volume 30(2011), Number 2: 237-246.

\bibitem {7}J. McCrae and K. Singh. Sketching piecewise clothoid curves. In
Sketch-Based Interfaces and Modeling, 2008.

\bibitem {8}L. Guiqing, L. Xianmin, and L. Hua. 3D discrete clothoid splines.
In CGI '01, pages 321-324, 2001.

\bibitem {9}R. Levien. The Euler spiral: a mathematical history. Technical
Report UCB/EECS-2008-111, EECS Department, University of California, Berkeley, 2008.

\bibitem {10}M.do Carmo, Differential Geometry of Curves and Surfaces,
Prentice-Hall, New Jersey, 1976.

\bibitem {11}Seng, R.,Severdia, M.,\textit{The Clothoid, }http://online.redwoods.cc.ca.us/instruct/darnold/CalcProj/Fall07/MollyRyan/Paper.pdf.

\bibitem {12}Shifrin, T., Differential Geometry: A first Course in Curves and
Surfaces(Preliminary Version), University of Georgia, 2010.

\bibitem {13}W.Kuhnel, Differential Geometry:Curves-Surfaces- manifolds,
Braquschveis, Wiesbaden, 1999.
\end{thebibliography}
\end{document}